\documentclass[11pt]{article}
\usepackage[a4paper,margin=25mm]{geometry}
\usepackage{amsmath,amssymb,amsthm,mathtools,bm}
\usepackage{hyperref}

\newcommand{\Z}{\mathbb{Z}}
\newcommand{\AGL}{\mathrm{AGL}_1}
\newcommand{\comm}[2]{[#1,#2]}

\newtheoremstyle{uprightthm}
  {}{}{\normalfont\upshape}{}{\bfseries}{.}{ }{}
\theoremstyle{uprightthm}
\newtheorem{theorem}{Theorem}[section]
\newtheorem{lemma}[theorem]{Lemma}
\newtheorem{definition}[theorem]{Definition}
\newtheorem{proposition}[theorem]{Proposition}
\newtheorem{corollary}[theorem]{Corollary}
\newtheorem{example}[theorem]{Example}
\newtheorem{remark}[theorem]{Remark}

\title{Cross-Commuting Nonabelian Squares in Affine Groups over Finite Commutative Principal Ideal Rings}
\author{Kenta Kasai\\
Institute of Science Tokyo\\
\texttt{kenta@ict.eng.isct.ac.jp}}
\date{}

\begin{document}
\maketitle

\begin{abstract}
We study a commutation pattern in which two affine families commute completely
across the two families while each family retains internal noncommutativity.
For one-dimensional affine groups over finite commutative rings, we prove a
local--product dichotomy. Over a finite commutative local principal ideal ring,
the common centralizer of two noncommuting affine permutations is always
abelian, so the pattern is impossible. Over a direct product of two
commutative rings whose affine groups each contain a noncommuting pair, the
same pattern is constructed by separating the two noncommuting families into
different factors. More generally, over a finite commutative principal ideal
ring, the pattern exists if and only if at least two local factors are not
isomorphic to $\mathbb F_2$. Applied to residue rings, this yields an exact
classification: $\AGL(\Z/n\Z)$ contains the pattern if and only if at least
two prime-power factors of $n$ exceed $2$. We also compare this phenomenon with
the permutation-group setting, where the same pattern is easy to realize.
\end{abstract}

\section{Introduction}
Affine groups over cyclic objects have long appeared in several neighboring
lines of algebra, including the study of holomorphs of cyclic groups,
subgroups of affine groups over finite rings, and representations and
automorphisms of those groups. The common theme in that literature is to use
the interplay between additive structure and multiplicative units to understand
how much group-theoretic complexity can occur in a seemingly simple affine
setting.

The present note isolates one concrete commutation problem inside that broader
history. We ask whether two affine families can commute completely across the
two families while each family still retains internal noncommutativity. This is
the smallest pattern in which complete compatibility across the split coexists
with genuine nonabelian behavior on both sides.

Our main point is that the answer is governed not by size alone but by ring
structure. On the local side, we prove that over a finite commutative local
principal ideal ring, the common centralizer of two noncommuting affine
permutations is always abelian. Hence the target pattern is impossible there.
On the product side, we prove that if a direct product ring has two factors
whose affine groups each admit a noncommuting pair, then one can realize the
same pattern by placing the two noncommuting families in different factors. The
natural organizing principle is therefore a local--product dichotomy.
Because finite commutative principal ideal rings decompose into local factors,
this dichotomy in fact yields a complete classification in that setting.

The closest surrounding literature studies holomorphs of cyclic groups and
subgroups of affine groups over finite rings from broader structural viewpoints.
For example, Spaggiari studies mutually normalizing regular subgroups in the
holomorph of a cyclic group of prime-power order~\cite{Spaggiari2023}, Sato
studies automorphisms of holomorphs of cyclic groups~\cite{Sato2024Holomorph},
Pradhan and Sury study representation-theoretic aspects of holomorphs of cyclic
$p$-groups~\cite{PradhanSury2022}, Bayramgulov and Pogorelov study subgroups of
affine groups over Galois rings~\cite{BayramgulovPogorelov2024}, and Caranti,
Dalla Volta, and Sala study abelian regular subgroups of affine groups
~\cite{CarantiDallaVoltaSala2006}.

On the product side, our existence statement is essentially compatible with the
classical observation of Mills that if a group splits as a direct product of
characteristic subgroups, then its holomorph splits as the direct product of
the holomorphs of those factors~\cite{Mills1961}. Thus, once a ring or cyclic
object decomposes into coprime components that each support a noncommuting
affine pair, separating those pairs into different factors is a natural
direct-product construction. By contrast, we have not found a prior source
that explicitly isolates the local
common-centralizer obstruction proved here, or the resulting nonexistence of
the cross-commuting nonabelian square over finite commutative local principal
ideal rings. To the best of our knowledge, the safest summary is therefore
that the product-side existence is essentially implicit in classical holomorph
decomposition/direct-product arguments, whereas the local
obstruction/common-centralizer theorem appears to be new.

The residue-ring case of arithmetic modulo a positive integer then becomes a
concrete specialization of this general picture. Prime-power moduli belong to
the local side, whereas many composite moduli fall on the product side through
the Chinese remainder theorem (CRT). Appendix~A records the coding-theoretic
motivation that originally led to this question, but the body of the paper is
purely group-theoretic and ring-theoretic.

\section{Affine permutations over commutative rings}
Let $R$ be a commutative ring with identity. An affine permutation on $R$ is a
map
\[
x\longmapsto ax+b,
\]
where $a\in R^\times$ and $b\in R$. We identify this map with the pair $(a,b)$
and write $\AGL(R)$ for the corresponding affine group. The composition law is
\[
(a,b)\circ(c,d)=(ac,ad+b).
\]

The following criterion is the basic algebraic input.

\begin{lemma}[Commutation criterion]
\label{lem:comm-criterion-en}
Two affine permutations $(a,b)$ and $(c,d)$ in $\AGL(R)$ commute if and only if
\[
(a-1)d=(c-1)b.
\]
\end{lemma}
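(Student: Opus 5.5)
The plan is to compute both composites directly from the composition law and compare them coordinatewise; since two affine permutations are equal exactly when their pairs agree, commutation is equivalent to equality of the resulting pairs.

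First, by the stated law,
\[
(a,b)\circ(c,d)=(ac,\,ad+b),\qquad (c,d)\circ(a,b)=(ca,\,cb+d).
\]
The first coordinates agree because $R$ is commutative, so $ac=ca$. Thus the two composites are equal if and only if $ad+b=cb+d$.

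Second, rearranging this equality gives $ad-d=cb-b$, that is, $(a-1)d=(c-1)b$. Every step is reversible, which yields the stated equivalence.

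There is no real obstacle. The only points needing care are the identification of maps with pairs and the use of commutativity of $R$ for the first coordinate. The identification of maps with pairs is justified because the map $x\mapsto ax+b$ determines $b$ as its value at $0$ and $a$ as its value at $1$ minus $b$. The argument never uses that $a$ and $c$ are units, so the criterion holds for arbitrary affine maps.
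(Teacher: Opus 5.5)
Your proof is correct and follows the paper's argument: compute both composites, observe that the linear parts agree by commutativity of $R$, and rearrange $ad+b=cb+d$ into $(a-1)d=(c-1)b$. Your extra remarks, that evaluating at $0$ and $1$ justifies identifying maps with pairs and that invertibility of $a,c$ is never used, are accurate additions that the paper leaves implicit.
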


\begin{proof}
Since
\[
(a,b)\circ(c,d)=(ac,ad+b),\qquad
(c,d)\circ(a,b)=(ca,cb+d),
\]
and $R$ is commutative, the linear parts agree automatically. The two affine
maps commute exactly when the translation parts agree, namely when
$ad+b=cb+d$, and this is equivalent to $(a-1)d=(c-1)b$.
\end{proof}

\begin{definition}[Cross-commuting nonabelian square]
\label{def:square-en}
A quadruple $(F_0,F_1,G_0,G_1)$ in $\AGL(R)$ is called a
\emph{cross-commuting nonabelian square} if
\[
\comm{F_i}{G_j}=1\qquad (i,j\in\{0,1\}),
\]
but
\[
\comm{F_0}{F_1}\ne 1,\qquad \comm{G_0}{G_1}\ne 1.
\]
\end{definition}

This definition isolates the smallest pattern in which the two families are
completely compatible across the split while each family still carries genuine
noncommutativity of its own.

\section{The local obstruction}
From now on, let $R$ be a finite commutative local principal ideal ring. Then
$R$ has a unique maximal ideal $\mathfrak m=(\pi)$, every ideal is generated by
a power of $\pi$, and there is a smallest positive integer $\ell$ such that
$\pi^\ell=0$; see, for example,~\cite{McDonald1974}. For a nonzero element
$x\in R$, we write $v_\pi(x)=t$ if $x\in (\pi^t)\setminus (\pi^{t+1})$.

The proof below uses the Smith normal form over the principal ideal ring $R$.
In the present $2\times 2$ situation, this amounts to diagonalizing the matrix
of simultaneous commutation equations by invertible row and column operations.
It is a standard reduction in the structure theory of modules over principal
ideal rings; see, for example,~\cite{McDonald1974}.

\begin{theorem}[Local obstruction]
\label{thm:local-obstruction-en}
Let $R$ be a finite commutative local principal ideal ring. If
$F_0,F_1\in\AGL(R)$ do not commute, then the common centralizer
\[
C(F_0)\cap C(F_1)
\]
is abelian. In particular, no cross-commuting nonabelian square exists in
$\AGL(R)$.
\end{theorem}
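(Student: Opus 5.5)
We would work directly with the commutation criterion, Lemma~\ref{lem:comm-criterion-en}, and turn the common centralizer into the solution set of a $2\times 2$ linear system over $R$. Write $F_0=(a_0,b_0)$ and $F_1=(a_1,b_1)$. An element $(c,d)$ lies in $C(F_0)\cap C(F_1)$ if and only if $(a_i-1)d=(c-1)b_i$ for $i=0,1$. Put $u=c-1$. This says that the vector $v=(u,d)^{T}$ satisfies $Mv=0$, where
\[
M=\begin{pmatrix} b_0 & -(a_0-1)\\ b_1 & -(a_1-1)\end{pmatrix}.
\]
Also, $\det M=(a_0-1)b_1-(a_1-1)b_0$. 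By Lemma~\ref{lem:comm-criterion-en}, this determinant is nonzero exactly because $F_0$ and $F_1$ do not commute. By the same lemma, two elements $(c,d)$ and $(c',d')$ of the centralizer commute if and only if $ud'-u'd=0$, where $u'=c'-1$. This is precisely $\det(v,v')=0$, the determinant of the $2\times 2$ matrix with columns $v$ and $v'$. So the task is to show that any two vectors in $\ker M$ have vanishing determinant.

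Next we apply the Smith normal form over the local principal ideal ring $R$. Write $M=P\,\mathrm{diag}(\pi^s\varepsilon_1,\pi^t\varepsilon_2)\,Q$ with $P,Q\in\mathrm{GL}_2(R)$ and units $\varepsilon_i$; here $0\le s\le t\le\ell$, and an exponent equal to $\ell$ means a zero entry. Then $\det M$ is a unit times $\pi^{s+t}$. Since $\det M\ne 0$, we get $s+t<\ell$, and in particular both $s,t<\ell$.

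Now set $y=Qv$. Then $Mv=0$ if and only if $\pi^s y_1=0$ and $\pi^t y_2=0$. This holds if and only if $y_1\in(\pi^{\ell-s})$ and $y_2\in(\pi^{\ell-t})$, using that the annihilator of $\pi^s$ in $R$ is $(\pi^{\ell-s})$. For two kernel vectors $v,v'$ with images $y,y'$, we have $\det(v,v')=\det(Q)^{-1}\det(y,y')$. Moreover $\det(y,y')=y_1y_2'-y_1'y_2\in(\pi^{2\ell-s-t})$. Since $2\ell-s-t>\ell$, this ideal is zero. Hence $\det(v,v')=0$, so any two elements of $C(F_0)\cap C(F_1)$ commute, and the common centralizer is abelian.

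The ``in particular'' statement then follows at once. In a cross-commuting nonabelian square, $G_0$ and $G_1$ both lie in $C(F_0)\cap C(F_1)$, which we have just shown is abelian. This contradicts $\comm{G_0}{G_1}\ne1$.

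The main step needing care is the Smith normal form bookkeeping. We must check that the diagonal entries can be taken to be unit multiples of powers of $\pi$, which holds because every ideal of $R$ is $(\pi^k)$. We must also justify the annihilator identity $\mathrm{Ann}(\pi^s)=(\pi^{\ell-s})$. For the latter we would use that every element can be written as $\pi^k w$ with $w$ a unit, so $\pi^s\pi^k w=0$ if and only if $k\ge \ell-s$.
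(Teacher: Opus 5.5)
Your proposal is correct and follows the paper's proof essentially step for step. You encode $C(F_0)\cap C(F_1)$ as $\ker M$ with $\det M=\Delta\neq 0$, apply the Smith normal form to get exponents with $s+t<\ell$, and conclude that $ud'-u'd\in(\pi^{2\ell-s-t})=0$; your explicit check of $\operatorname{Ann}(\pi^s)=(\pi^{\ell-s})$ is a detail the paper leaves implicit.
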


\begin{proof}
Write
\[
F_0=(a_0,b_0),\qquad F_1=(a_1,b_1),
\]
and set
\[
s_0:=a_0-1,\qquad s_1:=a_1-1.
\]
By Lemma~\ref{lem:comm-criterion-en}, the noncommutativity of $F_0$ and $F_1$
means that
\[
\Delta:=s_0b_1-s_1b_0
\]
is nonzero.

Now let $G=(c,d)$ commute with both $F_0$ and $F_1$, and write $u:=c-1$. By
Lemma~\ref{lem:comm-criterion-en}, the pair $(u,d)$ satisfies
\[
b_0u-s_0d=0,\qquad b_1u-s_1d=0.
\]
Thus $(u,d)^{\mathsf T}$ lies in the kernel of the matrix
\[
M:=
\begin{pmatrix}
b_0 & -s_0\\
b_1 & -s_1
\end{pmatrix}.
\]
Its determinant is exactly $\Delta$.

Since $R$ is a principal ideal ring, $M$ admits a Smith normal form:
there exist invertible matrices $U,V\in \operatorname{GL}_2(R)$ and integers
$0\le \alpha\le \beta\le \ell$ such that
\[
UMV=\operatorname{diag}(\pi^\alpha,\pi^\beta),
\qquad
\alpha+\beta=v_\pi(\Delta)<\ell.
\]
Hence
\[
\ker(M)=V\bigl(\pi^{\ell-\alpha}R\times \pi^{\ell-\beta}R\bigr).
\]

Take two elements $G=(1+u,d)$ and $G'=(1+u',d')$ in $C(F_0)\cap C(F_1)$. Their
parameter vectors lie in $\ker(M)$, so there exist $r,s,r',s'\in R$ with
\[
\binom{u}{d}
=
V\binom{\pi^{\ell-\alpha}r}{\pi^{\ell-\beta}s},
\qquad
\binom{u'}{d'}
=
V\binom{\pi^{\ell-\alpha}r'}{\pi^{\ell-\beta}s'}.
\]
Therefore
\[
ud'-u'd
=
\det(V)\,\pi^{2\ell-\alpha-\beta}(rs'-r's).
\]
Because $\alpha+\beta<\ell$, we have $2\ell-\alpha-\beta>\ell$, and therefore
$\pi^{2\ell-\alpha-\beta}=0$. Hence $ud'=u'd$. By
Lemma~\ref{lem:comm-criterion-en}, $G$ and $G'$ commute.

Thus every two elements of $C(F_0)\cap C(F_1)$ commute, so this common
centralizer is abelian. The final assertion follows immediately from
Definition~\ref{def:square-en}.
\end{proof}

The theorem says that in the local case, once one family contains genuine
noncommutativity, the entire opposite side is forced into an abelian common
centralizer. The next corollaries simply record this in the square language and
then in the family language.

\begin{corollary}[Family version]
\label{cor:family-local-en}
Let $R$ be a finite commutative local principal ideal ring. Let
$\mathcal F=\{F_i\}_{i\in I}$ and $\mathcal G=\{G_j\}_{j\in J}$ be families in
$\AGL(R)$ such that every $F_i$ commutes with every $G_j$. If the family
$\mathcal F$ contains a noncommuting pair, then the family $\mathcal G$ is
pairwise commuting. By symmetry, if $\mathcal G$ contains a noncommuting pair,
then $\mathcal F$ is pairwise commuting. In particular, two cross-commuting
families cannot both be nonabelian.
\end{corollary}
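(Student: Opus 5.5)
The plan is to reduce the family statement directly to Theorem~\ref{thm:local-obstruction-en}. Suppose $\mathcal F$ contains a noncommuting pair, say $F_{i_0}$ and $F_{i_1}$ with $\comm{F_{i_0}}{F_{i_1}}\ne 1$.

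\emph{Step 1.} By hypothesis, every $G_j$ commutes with every member of $\mathcal F$. In particular, each $G_j$ commutes with both $F_{i_0}$ and $F_{i_1}$. Hence every $G_j$ lies in the common centralizer $C(F_{i_0})\cap C(F_{i_1})$.

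\emph{Step 2.} Since $F_{i_0}$ and $F_{i_1}$ do not commute, Theorem~\ref{thm:local-obstruction-en} applies to this pair. It says that $C(F_{i_0})\cap C(F_{i_1})$ is abelian. Any two members $G_j,G_{j'}$ of $\mathcal G$ lie in this abelian subgroup, so they commute. Thus $\mathcal G$ is pairwise commuting.

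\emph{Step 3.} The hypothesis ``every $F_i$ commutes with every $G_j$'' is symmetric in the two families. Running Steps~1 and~2 with the roles of $\mathcal F$ and $\mathcal G$ exchanged gives the second assertion.

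\emph{Step 4.} For the final sentence, suppose both families were nonabelian. Then $\mathcal F$ contains a noncommuting pair, so by Step~2 the family $\mathcal G$ is pairwise commuting, which is a contradiction. The case $|I|=2$, $|J|=2$ recovers the ``in particular'' clause of Theorem~\ref{thm:local-obstruction-en}.

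I expect no genuine obstacle here. The only point needing care is that the families may be infinite or have repetitions, and that the argument uses only one noncommuting pair from $\mathcal F$ rather than the whole family. Since the abelian conclusion concerns the common centralizer of just that pair, which contains all of $\mathcal G$, this causes no difficulty.
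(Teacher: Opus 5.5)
Your proposal is correct and follows the paper's proof essentially verbatim. Both arguments pick a single noncommuting pair in $\mathcal F$ and note that every $G_j$ lies in the common centralizer of that pair. They then invoke Theorem~\ref{thm:local-obstruction-en} to conclude that this centralizer is abelian, and exchange the roles of $\mathcal F$ and $\mathcal G$ for the symmetric assertion.
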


\begin{proof}
Assume that $F_r$ and $F_s$ do not commute for some $r,s\in I$. Since every
$G_j$ commutes with every element of $\mathcal F$, each $G_j$ belongs to
$C(F_r)\cap C(F_s)$. By Theorem~\ref{thm:local-obstruction-en}, this common
centralizer is abelian. Hence any two elements of $\mathcal G$ commute. The
second assertion follows by interchanging the roles of $\mathcal F$ and
$\mathcal G$.
\end{proof}

\section{Product constructions}
The obstruction above disappears once the ring splits into factors whose affine
groups already contain noncommuting pairs. This is the algebraic counterpart of
the CRT phenomenon for residue rings, and it is also compatible with Mills's
classical holomorph decomposition under direct products of characteristic
factors~\cite{Mills1961}.

\begin{theorem}[Product construction]
\label{thm:product-construction-en}
Let $R_1$ and $R_2$ be commutative rings with identity. Assume that
$\AGL(R_1)$ contains a noncommuting pair and that $\AGL(R_2)$ contains a
noncommuting pair. Then $\AGL(R_1\times R_2)$ contains a cross-commuting
nonabelian square.
\end{theorem}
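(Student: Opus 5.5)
The plan is to build the square componentwise, using the identification $\AGL(R_1\times R_2)\cong \AGL(R_1)\times\AGL(R_2)$. We have $(R_1\times R_2)^\times=R_1^\times\times R_2^\times$. An affine map $x\mapsto ax+b$ on $R_1\times R_2$ with $a=(a_1,a_2)$ and $b=(b_1,b_2)$ acts coordinatewise. The composition law $(a,b)\circ(c,d)=(ac,ad+b)$ is computed coordinatewise. So the map $((a_1,b_1),(a_2,b_2))\mapsto((a_1,a_2),(b_1,b_2))$ is a group isomorphism. Commutation in the product is therefore equivalent to commutation in each coordinate. One can also see this from Lemma~\ref{lem:comm-criterion-en}, since the equation $(a-1)d=(c-1)b$ in $R_1\times R_2$ is the conjunction of the corresponding equations in $R_1$ and in $R_2$.

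Next, choose a noncommuting pair $P_0,P_1\in\AGL(R_1)$ and a noncommuting pair $Q_0,Q_1\in\AGL(R_2)$. Write $1_k=(1,0)$ for the identity of $\AGL(R_k)$. Define
\[
F_i:=(P_i,1_2),\qquad G_j:=(1_1,Q_j)\qquad(i,j\in\{0,1\}),
\]
viewed in $\AGL(R_1\times R_2)$ through the isomorphism above. Concretely, if $P_i=(a_i,b_i)$ and $Q_j=(c_j,d_j)$, then $F_i=((a_i,1),(b_i,0))$ and $G_j=((1,c_j),(0,d_j))$.

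Now verify the conditions of Definition~\ref{def:square-en}. For the cross-commutation, take any $i,j$. In the first coordinate $F_i$ has component $P_i$ and $G_j$ has the identity, and in the second coordinate $F_i$ has the identity and $G_j$ has $Q_j$. Identities commute with everything, so each coordinate commutes, and hence $\comm{F_i}{G_j}=1$. For the internal noncommutativity, the first coordinate of $\comm{F_0}{F_1}$ is $\comm{P_0}{P_1}\ne1$, so $\comm{F_0}{F_1}\ne1$. In the same way, the second coordinate gives $\comm{G_0}{G_1}\ne1$.

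No step is a real obstacle. The only point needing care is confirming that the isomorphism $\AGL(R_1\times R_2)\cong\AGL(R_1)\times\AGL(R_2)$ respects the composition convention. This holds because units and all ring operations in $R_1\times R_2$ are computed coordinatewise. If one prefers to avoid the isomorphism, the same conclusion follows directly from Lemma~\ref{lem:comm-criterion-en} using the explicit pairs given above.
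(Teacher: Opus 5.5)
Your proposal is correct and follows essentially the same route as the paper: embed the two noncommuting pairs in separate coordinates of $R_1\times R_2$, so that cross pairs commute automatically and internal noncommutation is inherited coordinatewise. You also explicitly check that $\AGL(R_1\times R_2)\cong\AGL(R_1)\times\AGL(R_2)$ respects the composition law, a point the paper's proof uses without stating it.
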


\begin{proof}
Choose noncommuting affine permutations $A_0,A_1\in\AGL(R_1)$ and
$B_0,B_1\in\AGL(R_2)$. Define affine permutations on $R_1\times R_2$ by
\[
F_i(x_1,x_2)=(A_i(x_1),x_2),\qquad
G_j(x_1,x_2)=(x_1,B_j(x_2)).
\]
Every $F_i$ acts only on the first factor and every $G_j$ acts only on the
second factor, so all cross pairs commute. On the other hand,
\[
F_0F_1=F_1F_0 \iff A_0A_1=A_1A_0,
\]
and similarly
\[
G_0G_1=G_1G_0 \iff B_0B_1=B_1B_0.
\]
Thus $F_0,F_1$ do not commute and $G_0,G_1$ do not commute.
\end{proof}

The product construction should be read as the exact opposite of the local
obstruction: once the ring splits into independent factors that each already
carry a noncommuting pair, the two noncommuting families can be separated into
different components, so the cross commutation becomes automatic.

\begin{proposition}[Product family construction]
\label{prop:product-family-en}
Let $R_1$ and $R_2$ be commutative rings with identity. Let
$\mathcal A\subset \AGL(R_1)$ and $\mathcal B\subset \AGL(R_2)$ be arbitrary
families. Then there exist families
$\widetilde{\mathcal A},\widetilde{\mathcal B}\subset \AGL(R_1\times R_2)$
such that every element of $\widetilde{\mathcal A}$ commutes with every
element of $\widetilde{\mathcal B}$, while the internal commutation relations
inside $\widetilde{\mathcal A}$ and inside $\widetilde{\mathcal B}$ are
exactly the same as those inside $\mathcal A$ and $\mathcal B$, respectively.
In particular, if both $\mathcal A$ and $\mathcal B$ are nonabelian, then
$\AGL(R_1\times R_2)$ contains two cross-commuting nonabelian families.
\end{proposition}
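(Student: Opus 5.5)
The plan is to repeat the embedding from the proof of Theorem~\ref{thm:product-construction-en}, now applied to whole families. The key fact is that $\AGL(R_1\times R_2)\supseteq \AGL(R_1)\times\AGL(R_2)$. This holds because $(R_1\times R_2)^\times=R_1^\times\times R_2^\times$, and an affine map $((a_1,a_2),(b_1,b_2))$ acts coordinatewise as $(x_1,x_2)\mapsto(a_1x_1+b_1,a_2x_2+b_2)$.

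First, I would define two maps
\[
\iota_1:\AGL(R_1)\to\AGL(R_1\times R_2),\quad (a,b)\mapsto((a,1),(b,0)),
\]
\[
\iota_2:\AGL(R_2)\to\AGL(R_1\times R_2),\quad (c,d)\mapsto((1,c),(0,d)).
\]
Using the composition law $(a,b)\circ(c,d)=(ac,ad+b)$ componentwise, I would check that both are injective group homomorphisms. Equivalently, $\iota_1(A)$ acts by $A$ on the first coordinate and trivially on the second, exactly like the maps $F_i$ in the proof of Theorem~\ref{thm:product-construction-en}.

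Next, set $\widetilde{\mathcal A}:=\iota_1(\mathcal A)$ and $\widetilde{\mathcal B}:=\iota_2(\mathcal B)$.

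\emph{Cross commutation.} Take $\iota_1(a,b)$ and $\iota_2(c,d)$. Their linear parts minus one are $(a-1,0)$ and $(0,c-1)$, and their translations are $(b,0)$ and $(0,d)$. By Lemma~\ref{lem:comm-criterion-en}, they commute if and only if
\[
(a-1,0)(0,d)=(0,c-1)(b,0).
\]
Both sides equal $(0,0)$, so every cross pair commutes.

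\emph{Internal relations.} Since $\iota_1$ is an injective homomorphism, $[\iota_1(A),\iota_1(A')]=\iota_1([A,A'])$, and this equals $1$ if and only if $[A,A']=1$. So the commuting graph of $\mathcal A$ is carried over exactly, and the same holds for $\iota_2$ and $\mathcal B$. One can also see this directly from Lemma~\ref{lem:comm-criterion-en}: the commutation equation for the images is the pair consisting of the original equation in $R_1$ and the trivial equation $0=0$ in $R_2$.

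Finally, if $\mathcal A$ and $\mathcal B$ each contain a noncommuting pair, their images do as well, which gives the two cross-commuting nonabelian families claimed in the last sentence. There is no real obstacle. The only point needing care is that ``exactly the same'' means commutation is both preserved and reflected, and injectivity of $\iota_1$ and $\iota_2$ is precisely what makes noncommuting pairs stay noncommuting.
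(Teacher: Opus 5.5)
Your proof is correct and takes the same approach as the paper: each $A\in\mathcal A$ acts as $A$ on the first coordinate and trivially on the second (and symmetrically for $\mathcal B$), cross pairs commute because they act on different factors, and commutation inside each image holds exactly when it holds for the originals. Your explicit check via Lemma~\ref{lem:comm-criterion-en} and the injective-homomorphism framing only spell out the same steps in more detail.
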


\begin{proof}
For each $A\in\mathcal A$, define
\[
\widetilde A(x_1,x_2)=(A(x_1),x_2),
\]
and for each $B\in\mathcal B$, define
\[
\widetilde B(x_1,x_2)=(x_1,B(x_2)).
\]
Every $\widetilde A$ acts trivially on the second factor and every
$\widetilde B$ acts trivially on the first, so every cross pair commutes.
Moreover,
\[
\widetilde A_1\widetilde A_2=\widetilde A_2\widetilde A_1
\iff
A_1A_2=A_2A_1,
\]
and similarly for the $B$ side. Hence the internal commutation relations are
preserved exactly.
\end{proof}

\begin{lemma}[When a local factor supports noncommutation]
\label{lem:local-noncommuting-pair-en}
Let $S$ be a finite commutative local ring with identity. Then $\AGL(S)$
contains a noncommuting pair if and only if $S\not\cong \mathbb F_2$.
\end{lemma}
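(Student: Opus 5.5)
We prove the two directions separately, using the commutation criterion of Lemma~\ref{lem:comm-criterion-en} throughout. By that lemma, $\AGL(S)$ contains a noncommuting pair exactly when there exist $a,c\in S^\times$ and $b,d\in S$ with $(a-1)d\ne(c-1)b$.

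For the direction ``$S\cong\mathbb F_2$ implies no noncommuting pair'', we observe that $S^\times=\{1\}$ when $S\cong\mathbb F_2$. Hence every affine permutation has the form $(1,b)$, and $\AGL(S)$ consists only of translations. Then $a-1=c-1=0$, so both sides of the criterion vanish and any two elements commute.

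For the converse, assume $S\not\cong\mathbb F_2$. The key step is to produce a unit $a\ne1$. Since $S$ is local with maximal ideal $\mathfrak m$, we have $S^\times=S\setminus\mathfrak m$, and we split into two cases.
\begin{itemize}
\item If $|S/\mathfrak m|>2$, pick $a\in S$ whose residue is neither $0$ nor $1$. Then $a$ is a unit and $a\ne1$.
\item If $S/\mathfrak m\cong\mathbb F_2$, then $S\not\cong\mathbb F_2$ forces $\mathfrak m\ne0$. Pick $0\ne m\in\mathfrak m$ and set $a=1+m$. This is a unit, because $1+\mathfrak m\subseteq S^\times$ in a local ring, and $a\ne1$.
\end{itemize}
In either case we have a unit $a$ with $s:=a-1\ne0$. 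Now take $F_0=(a,0)$ and $F_1=(1,1)$. The criterion requires $(a-1)\cdot1=(1-1)\cdot0$, that is, $s=0$, which is false. So $F_0$ and $F_1$ do not commute.

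The only delicate point is the second case, namely a residue field $\mathbb F_2$ with nonzero maximal ideal, as in $\Z/4\Z$. Here every unit is congruent to $1$, and the required unit $a\ne1$ must come from $1+\mathfrak m$. Finiteness and the principal ideal property are not actually needed beyond $S$ being local.
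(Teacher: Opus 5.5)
Your proof is correct and follows essentially the same route as the paper. It uses the same case split (residue field larger than $\mathbb F_2$, versus residue field $\mathbb F_2$ with $\mathfrak m\ne 0$ and unit $1+x$) and the same witness pair $(a,0)$, $(1,1)$, checked via Lemma~\ref{lem:comm-criterion-en}; the paper notes that $a-1$ is even a unit in the first case, but as you observe only $a-1\ne 0$ is needed.
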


\begin{proof}
If $S\cong\mathbb F_2$, then $\AGL(S)=\{(1,0),(1,1)\}$ is abelian.

Conversely, let $\mathfrak m$ be the maximal ideal of $S$. If the residue field
$S/\mathfrak m$ has more than two elements, choose a unit $a\in S^\times$ whose
image in $S/\mathfrak m$ is not $1$. Then $a-1$ is a unit, so by
Lemma~\ref{lem:comm-criterion-en} the elements $(1,1)$ and $(a,0)$ do not
commute.

If instead $S/\mathfrak m\cong \mathbb F_2$ but $S\not\cong\mathbb F_2$, then
$\mathfrak m\ne 0$. Choose $0\ne x\in\mathfrak m$. Since $x$ lies in the
maximal ideal, $1+x$ is a unit. Again by Lemma~\ref{lem:comm-criterion-en}, the
elements $(1,1)$ and $(1+x,0)$ do not commute, because $(1+x)-1=x\ne 0$.
\end{proof}

\begin{theorem}[Classification over finite commutative principal ideal rings]
\label{thm:pir-classification-en}
Let $R$ be a finite commutative principal ideal ring, and write
\[
R\cong R_1\times \cdots \times R_t
\]
as its canonical decomposition into finite commutative local principal ideal
rings. The following are equivalent.
\begin{enumerate}
\item $\AGL(R)$ contains a cross-commuting nonabelian square.
\item There exist distinct indices $i\ne j$ such that $\AGL(R_i)$ and
$\AGL(R_j)$ each contain a noncommuting pair.
\item At least two of the local factors $R_k$ are not isomorphic to
$\mathbb F_2$.
\end{enumerate}
\end{theorem}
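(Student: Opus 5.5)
The equivalence (2)$\Leftrightarrow$(3) is immediate from Lemma~\ref{lem:local-noncommuting-pair-en} applied to each local factor $R_k$: $\AGL(R_k)$ contains a noncommuting pair exactly when $R_k\not\cong\mathbb F_2$. For (2)$\Rightarrow$(1), reorder the factors so that $i=1$, $j=2$. Regroup $R\cong R_1\times R'$, where $R'=R_2\times\cdots\times R_t$. A noncommuting pair in $\AGL(R_2)$ lifts to one in $\AGL(R')$: take $(a,b)$ with identity and zero in the remaining coordinates. By Lemma~\ref{lem:comm-criterion-en}, the defect $(a-1)d-(c-1)b$ is simply padded by zeros, so it stays nonzero. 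Theorem~\ref{thm:product-construction-en} then gives the square.

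The real content is (1)$\Rightarrow$(2). I will use that, under $R\cong\prod_k R_k$, the group $\AGL(R)$ is the direct product $\prod_k\AGL(R_k)$, acting coordinatewise, since units and translations split componentwise. By Lemma~\ref{lem:comm-criterion-en}, two elements commute iff the single equation $(a-1)d=(c-1)b$ holds in $R$, i.e.\ iff it holds in every coordinate. So $X,Y$ commute iff each projection pair $X^{(k)},Y^{(k)}$ commutes in $\AGL(R_k)$.

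Now suppose $(F_0,F_1,G_0,G_1)$ is a cross-commuting nonabelian square. Since $F_0,F_1$ do not commute, there is an index $i$ with $F_0^{(i)},F_1^{(i)}$ noncommuting. Likewise there is $j$ with $G_0^{(j)},G_1^{(j)}$ noncommuting. Both $\AGL(R_i)$ and $\AGL(R_j)$ therefore contain a noncommuting pair, and it remains to show $i\ne j$. Suppose $i=j$. Then in the local principal ideal ring $R_i$, the projections $(F_0^{(i)},F_1^{(i)},G_0^{(i)},G_1^{(i)})$ still satisfy all cross-commutations, because these pass to coordinates. Both internal pairs are noncommuting by choice of $i=j$. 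So this would be a cross-commuting nonabelian square in $\AGL(R_i)$, contradicting Theorem~\ref{thm:local-obstruction-en}. Hence $i\ne j$, which is (2).

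The main obstacle is this last step. It requires choosing the indices from the \emph{noncommuting} coordinates and checking that commutation is genuinely a coordinatewise condition, so that the square descends to a single local factor. The coordinatewise statement follows cleanly from Lemma~\ref{lem:comm-criterion-en} and the componentwise ring operations, so I expect no further difficulty. One should also note that the decomposition into local principal ideal rings, cited from~\cite{McDonald1974}, is what makes Theorem~\ref{thm:local-obstruction-en} applicable to each factor.
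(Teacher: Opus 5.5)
Your proposal is correct and takes essentially the same route as the paper: you use coordinatewise commutation in $\AGL(R)\cong\prod_k\AGL(R_k)$, choose coordinates $i$ and $j$ where the $F$-pair and the $G$-pair fail to commute, and rule out $i=j$ via Theorem~\ref{thm:local-obstruction-en}, with (2)$\Leftrightarrow$(3) coming from Lemma~\ref{lem:local-noncommuting-pair-en}. The only cosmetic difference is in (2)$\Rightarrow$(1), where you regroup as $R_1\times R'$ and invoke Theorem~\ref{thm:product-construction-en}, whereas the paper writes down the factor-supported elements directly.
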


\begin{proof}
Identify $\AGL(R)$ with $\AGL(R_1)\times\cdots\times \AGL(R_t)$, and let
$\operatorname{pr}_k$ denote the projection to the $k$-th factor.

Assume first that $(F_0,F_1,G_0,G_1)$ is a cross-commuting nonabelian square in
$\AGL(R)$. Since $F_0$ and $F_1$ do not commute in the direct product, there
exists an index $i$ such that $\operatorname{pr}_i(F_0)$ and
$\operatorname{pr}_i(F_1)$ do not commute in $\AGL(R_i)$. Because every cross
pair commutes in $\AGL(R)$, the projected quadruple is cross-commuting in
$\AGL(R_i)$. Theorem~\ref{thm:local-obstruction-en} therefore implies that
$\operatorname{pr}_i(G_0)$ and $\operatorname{pr}_i(G_1)$ commute. By symmetry,
there exists an index $j$ such that $\operatorname{pr}_j(G_0)$ and
$\operatorname{pr}_j(G_1)$ do not commute, and then
$\operatorname{pr}_j(F_0)$ and $\operatorname{pr}_j(F_1)$ commute. Hence
$i\ne j$. This proves $(1)\Rightarrow(2)$.

Next assume $(2)$. Choose noncommuting pairs
$A_0,A_1\in \AGL(R_i)$ and $B_0,B_1\in \AGL(R_j)$ with $i\ne j$. Define
elements of $\AGL(R)$ by letting $F_0,F_1$ act as $A_0,A_1$ on the $i$-th
factor and trivially on all other factors, and letting $G_0,G_1$ act as
$B_0,B_1$ on the $j$-th factor and trivially on all other factors. Then every
$F_r$ commutes with every $G_s$, while $F_0,F_1$ and $G_0,G_1$ remain
noncommuting on their respective factors. Thus $(1)$ holds. This proves
$(2)\Rightarrow(1)$.

Finally, $(2)$ and $(3)$ are equivalent by
Lemma~\ref{lem:local-noncommuting-pair-en}, applied to each local factor
$R_k$.
\end{proof}

\section{Consequences for residue rings}
We now specialize the local--product picture to the ring $\Z/n\Z$.

\begin{corollary}[Exact classification for residue rings]
\label{cor:zn-consequences-en}
Let
\[
n=\prod_{k=1}^t p_k^{e_k}
\]
be the prime-power factorization of $n\ge 2$. The following are equivalent.
\begin{enumerate}
\item $\AGL(\Z/n\Z)$ contains a cross-commuting nonabelian square.
\item At least two prime-power factors $p_k^{e_k}$ are greater than $2$.
\item $n$ is neither a prime power nor twice an odd prime power.
\end{enumerate}
\end{corollary}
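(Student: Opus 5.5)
The plan is to specialize Theorem~\ref{thm:pir-classification-en} to $R=\Z/n\Z$. By the Chinese remainder theorem,
\[
\Z/n\Z\cong \Z/p_1^{e_1}\Z\times\cdots\times \Z/p_t^{e_t}\Z .
\]
Each factor $\Z/p_k^{e_k}\Z$ is a finite commutative local principal ideal ring, with maximal ideal $(p_k)$ and every ideal generated by a power of $p_k$. So this is exactly the canonical decomposition into local factors required by the theorem. Since $\Z/n\Z$ is itself a finite principal ideal ring, being a quotient of $\Z$, the theorem applies directly. It gives that (1) holds if and only if at least two factors $\Z/p_k^{e_k}\Z$ are not isomorphic to $\mathbb F_2$.

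Next I would observe that $\Z/p^e\Z\cong\mathbb F_2$ if and only if $p^e=2$. Comparing cardinalities, $|\Z/p^e\Z|=p^e$, so isomorphism with $\mathbb F_2$ forces $p^e=2$, and the converse is trivial. Since $p^e\ge 2$ always, being non-isomorphic to $\mathbb F_2$ is the same as $p_k^{e_k}>2$. This gives (1)$\Leftrightarrow$(2). Alternatively, one can invoke Lemma~\ref{lem:local-noncommuting-pair-en} factorwise, which is what the theorem already packages.

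For (2)$\Leftrightarrow$(3), I would argue the negations are equivalent, by casework on the number of prime-power factors exceeding $2$.
\begin{itemize}
\item Suppose at most one factor $p_k^{e_k}$ exceeds $2$. The only prime power that can fail to exceed $2$ is $2^1$, and at most one factor involves the prime $2$, since the primes are distinct. So either $t=1$, making $n$ a prime power, or $t=2$ with one factor equal to $2$ and the other an odd prime power, making $n$ twice an odd prime power.
\item Conversely, suppose $n=p^e$ is a prime power. Then there is only one factor, so (2) fails.
\item Suppose instead $n=2q^f$ with $q$ odd. The factors are $2$ and $q^f$, so only one exceeds $2$ and again (2) fails.
\end{itemize}
Note that $n=2$ falls under ``prime power'' here, consistent with the convention $n\ge 2$.

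No step is a real obstacle; the argument is bookkeeping. The only point needing care is the last equivalence. There one must use the fact that at most one prime-power factor can equal $2$, so that the failure of (2) leaves only the two listed shapes of $n$.
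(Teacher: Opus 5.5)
Your proposal is correct and follows the paper's route. You use the CRT decomposition into the local factors $\Z/p_k^{e_k}\Z$, note that such a factor is isomorphic to $\mathbb F_2$ exactly when $p_k^{e_k}=2$, and invoke Theorem~\ref{thm:pir-classification-en} for (1)$\Leftrightarrow$(2); your casework for (2)$\Leftrightarrow$(3), using that at most one factor can equal $2$ so failure of (2) forces $t\le 2$, just spells out what the paper calls immediate.
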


\begin{proof}
By the CRT,
\[
\Z/n\Z \cong \prod_{k=1}^t \Z/p_k^{e_k}\Z.
\]
Each factor $\Z/p_k^{e_k}\Z$ is a finite commutative local principal ideal
ring, and it is isomorphic to $\mathbb F_2$ exactly when $p_k^{e_k}=2$.
Therefore $(1)$ and $(2)$ are equivalent by
Theorem~\ref{thm:pir-classification-en}. The equivalence of $(2)$ and $(3)$ is
immediate: condition $(2)$ fails exactly when there is at most one prime-power
factor greater than $2$, namely when $n$ is a prime power or $n=2q$ with $q$ an
odd prime power.
\end{proof}

\begin{example}[The smallest residue-ring example]
The smallest modulus covered by
Corollary~\ref{cor:zn-consequences-en} is $n=12=3\cdot 4$. Taking the CRT
idempotents
\[
e_3=4,\qquad e_4=9
\]
in $\Z/12\Z$, one obtains
\[
F_0(x)=x+4,\qquad F_1(x)=5x,
\]
\[
G_0(x)=x+9,\qquad G_1(x)=7x.
\]
Then every $F_i$ commutes with every $G_j$, while $F_0$ and $F_1$ do not
commute and $G_0$ and $G_1$ do not commute.
\end{example}

\begin{example}[The modulus $n=768$]
The modulus used in~\cite{Kasai2026} is $n=768=3\cdot 256$, so
Corollary~\ref{cor:zn-consequences-en} applies. Writing the CRT idempotents as
\[
e_3=256,\qquad e_{256}=513
\]
in $\Z/768\Z$, one convenient choice is
\[
F_0(x)=x+256,\qquad F_1(x)=257x,
\]
\[
G_0(x)=x+513,\qquad G_1(x)=511x.
\]
Again, every $F_i$ commutes with every $G_j$, while
$\comm{F_0}{F_1}\ne 1$ and $\comm{G_0}{G_1}\ne 1$.
\end{example}

\section{Comparison with permutation groups}
The local obstruction above is specific to affine groups. It does not extend to
the permutation-group setting, where the same pattern is easy to realize.

\begin{proposition}[Permutation-group comparison]
\label{prop:gpm-comparison-en}
For every integer $n\ge 6$, the symmetric group $S_n$ contains a
cross-commuting nonabelian square.
\end{proposition}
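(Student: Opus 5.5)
The plan is to use the same separation-of-supports idea as in Theorem~\ref{thm:product-construction-en}, with disjoint sets of points taking the place of ring factors. The underlying fact is that permutations with disjoint supports commute. It therefore suffices to find a noncommuting pair of permutations supported on $\{1,2,3\}$ and another supported on $\{4,5,6\}$. Both pairs then sit inside $S_6\le S_n$ for every $n\ge 6$, acting as the identity on the points $7,\dots,n$.

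Concretely, I would take
\[
F_0=(1\,2),\quad F_1=(1\,3),\quad G_0=(4\,5),\quad G_1=(4\,6).
\]

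The verification has three steps. First, for every $i,j$ the supports of $F_i$ and $G_j$ are disjoint, being contained in $\{1,2,3\}$ and $\{4,5,6\}$ respectively. Hence $\comm{F_i}{G_j}=1$ for all $i,j\in\{0,1\}$. Second, composing gives $(1\,2)(1\,3)$ and $(1\,3)(1\,2)$, which are the two distinct $3$-cycles on $\{1,2,3\}$. They are inverse to each other and not equal, so $\comm{F_0}{F_1}\ne1$. Third, the same computation with the labels $1,2,3$ replaced by $4,5,6$ gives $\comm{G_0}{G_1}\ne1$. By Definition~\ref{def:square-en}, these three facts make $(F_0,F_1,G_0,G_1)$ a cross-commuting nonabelian square.

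There is no real obstacle. The only points needing care are that the noncommutation in the second and third steps is checked by an honest computation rather than assumed, and that the embedding $S_6\hookrightarrow S_n$ (fixing the extra points) preserves both the commutations and the noncommutations. This is clear, because the embedding is an injective homomorphism.

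I would also remark that $n\ge6$ is exactly the threshold this construction needs, since each side requires a nonabelian symmetric group on its own disjoint block of at least three points.
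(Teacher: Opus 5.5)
Your proposal is correct and is essentially the paper's proof: both put a noncommuting pair on $\{1,2,3\}$ and another on $\{4,5,6\}$, so that disjoint supports give all the cross commutations. The only difference is cosmetic; the paper uses the pair $(1\,2\,3),(1\,2)$ (and likewise on $\{4,5,6\}$), where you use two transpositions sharing a point.
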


\begin{proof}
Let
\[
F_0=(1\,2\,3),\qquad F_1=(1\,2),\qquad
G_0=(4\,5\,6),\qquad G_1=(4\,5)
\]
in $S_n$, with all remaining points fixed. Since $F_0$ and $F_1$ act on
$\{1,2,3\}$, they do not commute. Likewise $G_0$ and $G_1$ do not commute on
$\{4,5,6\}$. On the other hand, every $F_i$ has support disjoint from every
$G_j$, so all cross pairs commute. Thus these four permutations form a
cross-commuting nonabelian square.
\end{proof}

\begin{remark}
In particular, the existence direction is much easier for permutation groups
than for affine groups: it holds for every $n\ge 6$, including prime powers
such as $8$, $9$, and $16$. Therefore the obstruction in
Theorem~\ref{thm:local-obstruction-en} is a rigid feature of affine groups over
local rings rather than of permutation-like groups in general.
\end{remark}

\section{Conclusion}
The commutation pattern considered here is controlled by a local--product
distinction. Over a finite commutative local principal ideal ring, two
noncommuting affine permutations have an abelian common centralizer, and this
rules out the target pattern. Over a direct product ring whose two factors each
admit a noncommuting affine pair, the same pattern can be realized by
separating the two noncommuting families into different factors. From this
viewpoint, the product-side existence is essentially a direct-product phenomenon
already implicit in classical holomorph decompositions, whereas the new
ingredient is the local common-centralizer obstruction. Because finite
commutative principal ideal rings decompose into local factors, this yields a
complete global classification in that setting. The residue-ring case
$\Z/n\Z$ is therefore best viewed as a concrete specialization of a more
general ring-theoretic phenomenon.

\appendix
\section{Coding-theoretic motivation}
The present note is logically independent of coding theory, but one motivation
for the cross-commuting nonabelian square comes from permutation-based quantum
code constructions of Hagiwara--Imai type
~\cite{HagiwaraImai2007,KasaiEtAl2011,Kasai2026}. In that setting, cross
commutation is used to enforce a Calderbank--Shor--Steane (CSS) orthogonality
condition, whereas internal
noncommutation is used to avoid overly commuting configurations that create
Fossorier-type short cycles~\cite{Fossorier2004}. The following proposition
explains this point for the standard two-row block arrays.

\begin{proposition}[Why internal noncommutation is needed in the
Hagiwara--Imai setting]
\label{prop:HI-necessity-en}
Let $m=L/2\ge 3$, and consider the standard two-row Hagiwara--Imai block arrays
\[
\hat H_X=
\begin{bmatrix}
F_0 & F_1 & \cdots & F_{m-1} & G_0 & G_1 & \cdots & G_{m-1}\\
F_{m-1} & F_0 & \cdots & F_{m-2} & G_{m-1} & G_0 & \cdots & G_{m-2}
\end{bmatrix},
\]
\[
\hat H_Z=
\begin{bmatrix}
G_0^\top & G_{m-1}^\top & \cdots & G_1^\top & F_0^\top & F_1^\top & \cdots & F_{m-1}^\top\\
G_1^\top & G_0^\top & \cdots & G_2^\top & F_1^\top & F_0^\top & \cdots & F_2^\top
\end{bmatrix},
\]
with indices understood modulo $m$. If every $F_i$ commutes with every $G_j$,
then $\hat H_X\hat H_Z^\top=0$. If, moreover, the family
$F_0,\dots,F_{m-1}$ is pairwise commuting, then the left half of $\hat H_X$
contains a $2\times 3$ subarray of mutually commuting blocks, and hence the
Tanner graph contains a Fossorier-type closed block cycle of length $12$
~\cite{Fossorier2004}. The same conclusion holds if the family
$G_0,\dots,G_{m-1}$ is pairwise commuting. Therefore, to preserve the
Hagiwara--Imai orthogonality mechanism while excluding such Fossorier cycles,
one needs at least one noncommuting pair among the $F_i$'s and at least one
noncommuting pair among the $G_i$'s.
\end{proposition}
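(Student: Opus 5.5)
The plan has two independent parts: an orthogonality computation over $\mathbb F_2$, followed by a Fossorier-type cycle argument in the abelian case.

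\textbf{Orthogonality.} I would read off the block entries directly from the displayed arrays. In row $r\in\{0,1\}$ of $\hat H_X$, the $k$-th block of the left half is $F_{k-r}$ and that of the right half is $G_{k-r}$. In row $s$ of $\hat H_Z$, the $k$-th block of the left half is $G_{s-k}^\top$ and that of the right half is $F_{k-s}^\top$, with indices modulo $m$. Hence the $(r,s)$ block of $\hat H_X\hat H_Z^\top$ is
\[
\sum_{k} F_{k-r}G_{s-k}\;+\;\sum_{k} G_{k-r}F_{k-s}
\]
over $\mathbb F_2$. The key step is to reindex the second sum, substituting $k\mapsto k'$ modulo $m$, so that its terms are in bijection with those of the first sum. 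Under this bijection each term $F_aG_b$ of the first sum should be paired with a term $G_bF_a$ of the second. Once the bijection is in place, the hypothesis that every $F_i$ commutes with every $G_j$ gives $F_aG_b=G_bF_a$. Each block then appears twice, and the sum vanishes in characteristic $2$. I expect the main bookkeeping to be checking that the index conventions in $\hat H_Z$ make this pairing exact, rather than producing $G_{b}$ paired with a shifted $F$. This is exactly the reason $\hat H_Z$ has the reversed and shifted index pattern, so I would verify the four cases $(r,s)\in\{0,1\}^2$ explicitly.

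\textbf{Fossorier cycle.} Now assume $F_0,\dots,F_{m-1}$ pairwise commute. I would invoke Fossorier's criterion~\cite{Fossorier2004}: a closed walk of length $12$ in the Tanner graph corresponds to a sequence of $6$ block positions alternating row changes and column changes in the block array. Such a walk gives a cycle whenever the alternating product
\[
P_{i_1j_1}P_{i_2j_1}^{-1}P_{i_2j_2}P_{i_3j_2}^{-1}P_{i_3j_3}P_{i_1j_3}^{-1}
\]
equals the identity, where the $P$ are the permutation blocks.

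\begin{itemize}
\item I would take the two rows and three consecutive columns $j,j+1,j+2$ in the left half. Row $0$ then contains $F_j,F_{j+1},F_{j+2}$, and row $1$ contains $F_{j-1},F_j,F_{j+1}$. This is the $2\times 3$ subarray of mutually commuting blocks asserted in the statement.
\item Because all six blocks commute, any alternating product of the above form collapses to a product of exponents in the abelian group generated by the $F_i$.
\item The remaining task is to choose the order in which the walk visits the six positions so that every block appearing with exponent $+1$ also appears with exponent $-1$. The shared blocks $F_j,F_{j+1}$ occurring in both rows give the freedom to do this. Permuting the column order of the walk, and using the repeated blocks, should force the total exponent of each $F_i$ to be zero.
\end{itemize}

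\textbf{Main obstacle.} I expect this last step to be the hardest. A naive order produces something like $F_{j+2}F_{j-1}^{-1}$, which is not the identity in general. What I would try first is a walk whose column sequence revisits the shared blocks, so that $F_j$ and $F_{j+1}$ each appear once with each sign and the leftover $F_{j+2}$ and $F_{j-1}$ terms are avoided. If needed, I would choose the three columns non-consecutively within the left half to achieve this. The $G$ case is identical using the right half. The final sentence of the proposition then follows by contraposition.
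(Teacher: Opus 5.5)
\textbf{The cycle step has a genuine gap.} A closed walk of length $12$ in the Tanner graph uses $12$ edges, so it passes through $12$ block positions (counted with multiplicity), not $6$. The six-factor product you display, with row indices $i_1,i_2,i_3$, is Fossorier's condition for a $6$-cycle. In a two-row array $i_3$ must equal $i_1$ or $i_2$, so two adjacent factors cancel and the walk backtracks.

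Your proposed repair cannot work either. Write $r_0,r_1$ for the block rows and $c_j,c_{j+1},c_{j+2}$ for the block columns. The positions of your subarray that avoid $F_{j-1}$ and $F_{j+2}$ are $(r_0,c_j),(r_0,c_{j+1}),(r_1,c_{j+1}),(r_1,c_{j+2})$. In the base graph these form the path $c_j,r_0,c_{j+1},r_1,c_{j+2}$, and a path carries no non-backtracking closed walk. Choosing nonconsecutive columns does not change this.

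The missing idea, which the paper's citation of Fossorier supplies, is to go around $K_{2,3}$ twice. The vertex sequence $r_0,c_j,r_1,c_{j+1},r_0,c_{j+2},r_1,c_j,r_0,c_{j+1},r_1,c_{j+2},r_0$ is a non-backtracking closed walk of length $12$. It traverses each of the six edges exactly once in each direction. Hence each of the six blocks enters the alternating product once with exponent $+1$ and once with exponent $-1$, and pairwise commutativity alone makes the product the identity. No use of the repeated blocks $F_j,F_{j+1}$, or of the circulant structure, is needed.

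\textbf{The orthogonality bookkeeping also fails as you set it up.} You read the right half of $\hat H_Z$ as $F_{k-s}^\top$. That matches the first displayed row but not the second, since $F_1^\top,F_0^\top,\dots,F_2^\top$ is $F_{1-k}^\top$. With $F_{k-s}^\top$ the pairing you want does not exist:
\begin{itemize}
\item the first sum consists of terms $F_aG_b$ with $a+b\equiv s-r$;
\item the second consists of terms $G_bF_a$ with $b-a\equiv s-r$;
\item these index sets differ whenever $m\ge 3$.
\end{itemize}
For instance, for $r=s=0$ you get $\sum_k F_kG_{-k}+\sum_k G_kF_k$, which need not vanish even when all blocks commute.

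The convention behind the paper's formula $\sum_\ell(F_{\ell-j}G_{k-\ell}+G_{k-\ell}F_{\ell-j})$ is the block-circulant transpose $F_{s-k}^\top$. Under it, the first row of the right half should read $F_0^\top,F_{m-1}^\top,\dots,F_1^\top$. The second sum is then $\sum_k G_{k-r}F_{s-k}$. The substitution $k\mapsto r+s-k$ turns it into $\sum_k G_{s-k}F_{k-r}$, which pairs termwise with $\sum_k F_{k-r}G_{s-k}$. After this correction your orthogonality argument is the paper's.
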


\begin{proof}
A direct block multiplication gives, for $j,k\in\{0,1\}$,
\[
(\hat H_X\hat H_Z^\top)_{j,k}
=
\sum_{\ell=0}^{m-1}
\bigl(
F_{\ell-j}G_{k-\ell}+G_{k-\ell}F_{\ell-j}
\bigr),
\]
where the indices are taken modulo $m$. If every $F_i$ commutes with every
$G_j$, then each summand is twice the same permutation matrix, hence vanishes
over $\mathbb{F}_2$. This proves the orthogonality.

Now assume that the family $F_0,\dots,F_{m-1}$ is pairwise commuting. Since
$m\ge 3$, the left half of $\hat H_X$ contains a $2\times 3$ subarray such as
\[
\begin{pmatrix}
F_0 & F_1 & F_2\\
F_{m-1} & F_0 & F_1
\end{pmatrix},
\]
and all six blocks in this subarray commute with one another. By
Fossorier's theorem on commuting $2\times 3$ permutation subarrays
~\cite{Fossorier2004}, this forces a closed block cycle of length $12$.
Exactly the same argument applies to the right half if the family
$G_0,\dots,G_{m-1}$ is pairwise commuting. Hence avoiding these
Fossorier-type cycles requires a noncommuting pair in each family.
\end{proof}

\bibliographystyle{IEEEtran}
\bibliography{ub_refs}

\end{document}